 \newtheorem{theorem}{Theorem}[section]
 \newtheorem{lemma}[theorem]{Lemma}
 \newcommand{\mbb}{\mathbb}
 \newcommand{\R}{\mbb{R}}
 \newcommand{\Z}{\mbb{Z}}
\begin{document}

\title{Diophantine approximation on lines with prime constraints}

\author{Stephan Baier}
\author{Anish Ghosh} 
\address{S. Baier, A. Ghosh, School of Mathematics, University of East Anglia, Norwich, NR4 7TJ, England}

\email{s.baier@uea.ac.uk, a.ghosh@uea.ac.uk}

\subjclass[2000]{11J83, 11K60, 11L07}

\begin{abstract}
We study the problem of Diophantine approximation on lines in $\mathbb{R}^2$ with prime numerator and denominator.
\end{abstract}
\maketitle


\section{Introduction}

\subsection*{Diophantine approximation with restricted numerator and denominator}

The study of Diophantine approximation with restrictions placed on the numerator and denominator has a long history. For instance, the problem of approximating irrational numbers with rationals whose denominators are prime was studied by Vinogradov, Heath-Brown and Jia \cite{HeJi} and others. The current best possible result is due to Matom\"{a}ki \cite{Mato} and states that for irrational $\alpha$, there exist infinitely many primes $p$ such that
\begin{equation}\label{eq:res}
|\langle \alpha p \rangle| < p^{-\nu}
\end{equation}
\noindent for any $\nu < 1/3$. Here $\langle~\rangle$ is the distance to the nearest integer. The corresponding problem with \emph{both} numerator and denominator prime was studied by Ramachandra and others \cite{Rama, Srini}. More generally, one can study analogues of Khintchine's theorem with restrictions. The case of prime denominator follows from the theorem of Duffin-Schaeffer \cite{DuSch}. Harman \cite{HarmK} has established the complete analogue of Khintchine's theorem in the case of both prime numerator and denominator. His work was extended to higher dimensions by Jones \cite{Jones}.

\subsection*{Metric Diophantine approximation on manifolds} The subject of \emph{metric Diophantine approximation on manifolds} is concerned with studying Diophantine properties of points lying on proper submanifolds of $\R^n$. The requirement of lying on a proper submanifold makes the theory significantly more complicated than traditional metric Diophantine approximation. Typically, one seeks to prove that points on suitable manifolds of $\R^n$ inherit the Diophantine behaviour of $\R^n$. For instance, say that a vector $(x_1,\dots, x_n)$ is \emph{very well approximable} if for some $\epsilon > 0$ there are infinitely many $q \in \Z$ such that
\begin{equation}
\max_{i}|qx_i + p_i| \leq |q|^{-(1+\epsilon)/n}
\end{equation}
\noindent for some $(p_1, p_2,\dots, p_n) \in \Z^n$. It is an easy consequence of the Borel-Cantelli lemma that Lebesgue almost every vector in $\R^n$ is not very well approximable. In $1932$, Mahler conjectured that almost every point on the curve $(x, x^2, \dots, x^n)$ is not very well approximable. This conjecture was settled by Sprind\v{z}uk, who in turn conjectured that almost every point on a ``nondegenerate" manifold is not very well approximable. We will not define ``nondegenerate" here, pointing the reader to \cite{KM} where this conjecture is settled in a stronger form using dynamics on the space of unimodular lattices. Informally,  a nondegenerate manifold is one which does not locally lie in an affine subspace. It is expected that nondegenerate manifolds inherit Diophantine behaviour from $\R^n$ and this has been checked in many important cases. 

At the other end of the spectrum lie affine subspaces. For these, one does not expect inheritance unless one specifies some Diophantine condition on the matrix defining the affine subspace. See \cite{Kl1, Ghosh} for examples of results in this direction.\\

It is natural to enquire if one can combine the two problems discussed above and develop a theory of metric Diophantine approximation on manifolds with restricted numerator and denominator. In \cite{HaJo}, Harman and Jones proved the following result on simultaneous approximation of $\alpha$ and $\alpha^{\tau}$ by fractions $r/p$ and $q/p$ with $p, q, r$ primes. 

\begin{theorem}(Harman-Jones) Let $\varepsilon>0$ and $\tau>1$. Then for almost all positive $\alpha$, with respect to the Lebesgue measure, there are infinitely many $p,q,r$ prime such that
\begin{equation}
0 < p\alpha - r< p^{-3/16+\varepsilon} \quad \mbox{and} \quad 0<p\alpha^{\tau} - q< p^{-3/16+\varepsilon}.
\end{equation}
\end{theorem}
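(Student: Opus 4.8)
The plan is to run the standard machine of metric Diophantine approximation -- a one--scale measure (second--moment) estimate fed into a quasi--independence divergence Borel--Cantelli lemma -- with the prime conditions handled by Vaughan's identity (or Harman's sieve) and the necessary equidistribution supplied by exponential sum estimates for the phase $r^\tau$.

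First I would reduce the theorem to a measure estimate at a single scale. Fix a bounded interval $J$ with $\inf J>0$, fix a prime $p$, set $\delta=p^{-3/16+\varepsilon}$, and let $A_p\subset J$ be the set of $\alpha$ for which there exist primes $q,r$ with $0<p\alpha-r<\delta$ and $0<p\alpha^\tau-q<\delta$. Since $\delta<1$, each $\alpha$ admits at most one eligible $r$ and one eligible $q$, so $A_p=\bigsqcup_{r,q}(I_r\cap\widetilde I_q)$, where $I_r=(r/p,(r+\delta)/p)$ ranges over primes $r\asymp p$ and $\widetilde I_q=\{\alpha:q<p\alpha^\tau<q+\delta\}$ over primes $q\asymp p$. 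The substitution $\beta=p\alpha^\tau$ (for which $d\alpha/d\beta\asymp 1/p$ on $J$) then gives
\[
\mu(A_p)\ \asymp\ \frac1p\sum_{\substack{r\asymp p\\ r\ \mathrm{prime}}}\Big|\,J_r\cap\!\!\bigcup_{q\ \mathrm{prime}}\!\!(q,q+\delta)\,\Big|,\qquad J_r:=\big(r^\tau p^{1-\tau},\ r^\tau p^{1-\tau}+\tau r^{\tau-1}p^{1-\tau}\delta\big),
\]
an interval $J_r$ of length $\asymp\delta$.

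The heart of the argument is the lower bound $\mu(A_p)\gg p^{-3/8+2\varepsilon}(\log p)^{-2}$, valid for all large $p$. To prove it I would first bracket $\mathbf 1_{\bigcup_q(q,q+\delta)}$ between Beurling--Selberg minorant and majorant functions whose Fourier transforms are supported in $[-H,H]$, $H=\delta^{-1}p^{\varepsilon}$, expand the weight $\Lambda(q)$ over the primes $q$ and the prime indicator over $r$ via Vaughan's identity, and isolate the main term: it is the product of the density $\asymp\delta/\log p$ of primes $q$ with the density $\asymp 1/\log p$ of primes $r$ over $\asymp p/\log p$ values of $r$, whence the claimed bound. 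Everything else is bounded by linear and bilinear exponential sums $\sum_{r\asymp p}a_r\,e(h\,r^\tau p^{1-\tau})$ with $1\le|h|\le H$ (plus easier sums in $q$, which enters linearly, so that a short--interval prime number theorem on average over $r$, in the style of Selberg and Huxley, suffices there). I expect these exponential sums to be the main obstacle: after Vaughan one splits into Type I sums $\sum_m\sum_n e\big(h(mn)^\tau p^{1-\tau}\big)$, treated by van der Corput and partial summation since the phase has second derivative of size $\asymp h/p$ in each variable, and Type II sums $\sum_m\sum_n a_m b_n\,e\big(h(mn)^\tau p^{1-\tau}\big)$, treated by Cauchy--Schwarz followed by Weyl differencing in the long variable; balancing the Type I and Type II ranges against $H=p^{3/16-\varepsilon}$ is exactly what forces the exponent $3/16$. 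Because $\tau>1$ need not be rational, $r^\tau$ is never a polynomial, so these are genuine van der Corput estimates for a smooth phase rather than Weyl sums; non--integrality is if anything helpful, as $(mn)^\tau$ has no vanishing derivatives.

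Finally, with the one--scale estimate in place, I would invoke quasi--independence. The events $A_p$ (over primes $p$) have $\sum_p\mu(A_p)=\infty$ since $\sum_p p^{-3/8+2\varepsilon}(\log p)^{-2}=\infty$, so it suffices, by the Erd\H{o}s--G\'al--Koksma / Sprind\v{z}uk divergence Borel--Cantelli lemma, to show $\sum_{p,p'\le Q}\mu(A_p\cap A_{p'})\ll\big(\sum_{p\le Q}\mu(A_p)\big)^2+\sum_{p\le Q}\mu(A_p)$. Writing $\mu(A_p\cap A_{p'})$ as a sum over quadruples $(r,q,r',q')$ and estimating the measures of the resulting intersections of intervals, the off--diagonal contribution is controlled by the large sieve together with the same exponential sum input, while the diagonal $p=p'$ terms feed the linear term. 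This yields $\mu(\limsup_p A_p)=1$ on $J$, and a countable union of such $J$ covering $(0,\infty)$ finishes the proof; the one--sided inequalities $0<p\alpha-r$ and $0<p\alpha^\tau-q$ are preserved throughout, being built into the intervals $I_r$ and $\widetilde I_q$.
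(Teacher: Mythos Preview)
The paper does not prove this theorem; it is quoted from Harman--Jones \cite{HaJo} as background, and the paper then proves its own Theorem~\ref{super} by adapting their method. So the only comparison available is with the Harman--Jones scheme as it is reflected in the paper's argument for Theorem~\ref{super}.

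Your outline has a real gap in the arithmetic input. You locate the hard work in exponential sums $\sum_{r}a_r\,e(h\,r^\tau p^{1-\tau})$ with phase $r^\tau$, to be handled by van der Corput after Vaughan, and assert that balancing Type~I against Type~II there ``is exactly what forces the exponent $3/16$''. But the paper states explicitly (twice) that Harman--Jones rely on \emph{zero-density estimates for the Riemann zeta function}; the paper's own switch to exponential sums is possible only because its phase $cn$ is linear. In the curve problem the bottleneck is not the oscillation of $r^\tau$ but the detection of the prime $q$ in an interval of length $\asymp p^{-3/16+\varepsilon}$ about the moving centre $r^\tau p^{1-\tau}$ --- a primes-in-short-intervals problem whose admissible exponent is governed by Huxley-type zero-density input, not by a van der Corput balance. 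Your passing reference to ``a short--interval prime number theorem on average over $r$, in the style of Selberg and Huxley'' is much closer to the actual mechanism, yet you treat it as the easy part and put the weight on the wrong sums. As written, the proposal would not arrive at $3/16$.

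There is also a structural divergence worth noting. Harman--Jones (and the present paper, see Theorem~\ref{Theo}) do not run a divergence Borel--Cantelli on the individual events $A_p$. They bound $\int_a^b F_N(\alpha)\,d\alpha=\sum_{p\le N}\mu(A_p\cap[a,b])$ from below uniformly over \emph{all} subintervals $[a,b]$, pair this with a pointwise upper-bound-sieve estimate $F_N\le KG_N+J_N$ with $\int|J_N|=o(G_N)$, and finish via a Lebesgue-density ``Lemma~1'' from \cite{HaJo}. This only needs the aggregate $\sum_{p\le N}\mu(A_p)$ to be large, whereas your scheme demands $\mu(A_p)\gg p^{-3/8+2\varepsilon}(\log p)^{-2}$ for every large prime $p$ individually, together with a second-moment quasi-independence bound on $\mu(A_p\cap A_{p'})$; both are strictly harder statements than what the Harman--Jones route requires, and you have not indicated how your toolkit would secure them.
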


Note that if $\tau > 1$ which is assumed in the theorem, the curve $(\alpha, \alpha^{\tau})$ is an example of a nondegenerate curve. The generic best possible exponent above is $-1/2$. In this paper, we investigate the question of  Diophantine approximation with restricted numerator and denominator for the simplest case of affine subspaces, namely lines in $\R^2$. More precisely, we study simultaneous approximation of $\alpha$ and $c\alpha$ by $r/p$ and $q/p$ with $r$ and $p$ primes, $q$ an integer and $c$ a fixed irrational number. We prove the following.

\begin{theorem} \label{super} Let $\varepsilon>0$ and let $c>1$ be an irrational number. Then for almost all positive $\alpha$, with respect to the Lebesgue measure, there are infinitely many triples $(p,q,r)$ with $p$ and $r$ prime and $q$ an integer such that
\begin{equation} \label{simultan}
0 < p\alpha - r < p^{-1/5+\varepsilon} \quad \mbox{and} \quad 0< pc\alpha - q < p^{-1/5+\varepsilon}.
\end{equation}
\end{theorem}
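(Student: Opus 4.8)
We sketch the strategy. It is enough to prove the conclusion for almost every $\alpha$ in a fixed compact interval bounded away from $0$; we take $I=[1,2]$, the argument on any $[N,N+1]$ being identical and the general statement following by countable additivity. Fix $\theta=1/5-\varepsilon$ and, for a parameter $X\ge 2$, let $\A_X\subseteq I$ be the set of $\alpha$ for which \eqref{simultan}, with $-\theta$ in place of $-1/5+\varepsilon$, has a solution $(p,q,r)$ with $p$ prime in $(X,2X]$, $r$ prime and $q\in\Z$. Since every solution of \eqref{simultan} with $p$ large has $p$ in some dyadic block $(2^j,2^{j+1}]$, the set of $\alpha$ admitting infinitely many solutions contains $\limsup_{j\to\infty}\A_{2^j}$, and it suffices to show that this limsup set has full Lebesgue measure in $I$. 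We shall in fact prove the local lower bound
\[
|\A_X\cap J|\ \gg\ |J|\qquad\text{for every subinterval }J\subseteq I\text{ and every }X\ge X_0(J),
\]
and this suffices: the reverse Fatou inequality (valid since the indicator functions are uniformly bounded) gives $|\limsup_j\A_{2^j}\cap J|\ge\limsup_j|\A_{2^j}\cap J|\gg|J|$ for every $J$, whence $|\limsup_j\A_{2^j}|=|I|$ by the Lebesgue density theorem.

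Unwinding the definitions, for a fixed prime $p\in(X,2X]$ and a prime $r\sim X$ with $\langle cr\rangle\ll X^{-\theta}$ the two inequalities in \eqref{simultan} cut out an interval of admissible $\alpha$ of length $\gg X^{-1-\theta}$, namely the overlap of $(r/p,\,r/p+p^{-1-\theta})$ with the interval coming from the second inequality (non-empty exactly because $\langle cr\rangle$ is small, with $q=\lfloor cr\rfloor$ or $\lceil cr\rceil$); and these intervals are pairwise disjoint as $r$ ranges over primes $\sim X$, since the base points $r/p$ are at least $2/p\gg X^{-1-\theta}$ apart. Writing $\A_X^{(p)}$ for the portion of $\A_X$ arising from the single prime $p$, we thus have
\[
|\A_X^{(p)}\cap J|\ \asymp\ X^{-1-\theta}\cdot\#\{r\sim X\text{ prime}:\ \langle cr\rangle\ll X^{-\theta},\ r/p\in J\}.
\]
The counting function here is the heart of the matter. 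Detecting the condition $\langle cr\rangle\ll X^{-\theta}$ by a Beurling--Selberg majorant of degree $\asymp X^{\theta}$, and similarly the condition $r/p\in J$, the count equals a main term $\asymp|J|\,X^{-\theta}\,\pi(X)\asymp|J|\,X^{1-\theta}/\log X$ plus an error dominated by $\max_{1\le k\ll X^{\theta}}\bigl|\sum_{r\sim X\text{ prime}}e(kcr)\bigr|$, which by Vinogradov's estimate is $\ll\bigl(Xq^{-1/2}+X^{4/5}+(Xq)^{1/2}\bigr)(\log X)^{O(1)}$, $q$ denoting the denominator of a suitable rational approximation to $kc$. \textbf{This is precisely where the exponent $1/5$ enters}: for the error to beat the main term $X^{1-\theta}/\log X$ one needs $X^{4/5}$ to do so (forcing $\theta<1/5$) and one needs $q$ to lie in the window $[X^{2\theta},X^{1-2\theta}]$, and for badly approximable $c$, choosing the Dirichlet level $X^{1-2\theta}$ forces $q\gg X^{1-3\theta}$, which lands in that window exactly when $\theta\le 1/5$. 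Consequently $|\A_X^{(p)}\cap J|\asymp|J|\,X^{-2\theta}/\log X$, and summing over the $\asymp X/\log X$ primes $p\sim X$ the ``raw'' measure
\[
\sum_{p\sim X}|\A_X^{(p)}\cap J|\ \asymp\ |J|\,X^{1-2\theta}/(\log X)^2
\]
tends to infinity.

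Since the raw measure is unbounded while $|\A_X\cap J|\le|J|$, the intervals composing $\A_X$ overlap heavily across different $p$, and we recover $|\A_X\cap J|\gg|J|$ by applying the Cauchy--Schwarz inequality to $f=\sum_{p\sim X}\mathbf 1_{\A_X^{(p)}\cap J}$: one has $|\A_X\cap J|\ge(\int f)^2/\int f^2$, the numerator being the square of the raw measure, while the denominator $\int f^2=\sum_{p,p'\sim X}|\A_X^{(p)}\cap\A_X^{(p')}\cap J|$ splits into a diagonal contribution equal to the raw measure (negligible against its square once $X\ge X_0(J)$) and an off-diagonal contribution. For $p\ne p'$, a point in $\A_X^{(p)}\cap\A_X^{(p')}$ forces $|rp'-r'p|\ll X^{1-\theta}$, confining $r'$ to an interval of length $\ll X^{-\theta}$ and hence to at most one integer; estimating the count of quadruples $(p,p',r,r')$ with $r,r'$ prime, $\langle cr\rangle,\langle cr'\rangle\ll X^{-\theta}$, $r/p\in J$ and $|rp'-r'p|\ll X^{1-\theta}$ --- by an upper-bound (Selberg) sieve on the pair $(r,r')$, Fourier detection of the remaining conditions, and standard exponential-sum bounds --- shows that this off-diagonal contribution is $\ll$ the square of the raw measure. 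Cauchy--Schwarz then yields $|\A_X\cap J|\gg|J|$, completing the proof along the lines of the first paragraph.

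The \textbf{principal obstacle} is the exponential-sum input and, especially, its uniformity in the fixed irrational $c$. For badly approximable $c$ the required lower bound on the approximation denominators of $kc$ holds automatically, as indicated above; but if $c$ admits anomalously good rational approximations of moderate denominator --- a Liouville-type phenomenon --- there are sparse scales $X$ at which some $kc$ with $k\ll X^{\theta}$ lies too close to a small-denominator rational and the Vinogradov bound degrades. Such scales must be removed from the sequence $(2^j)$, which is harmless since infinitely many $j$ remain, and infinitely many suffice for a $\limsup$. The second delicate point is that the off-diagonal estimate involves two prime variables $r,r'$ linked by a linear relation, for which no asymptotic formula is available; one must instead rely on the Selberg-sieve upper bound, which happens --- crucially --- to be of exactly the order of magnitude needed to close the Cauchy--Schwarz argument. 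Keeping every error term strictly below the corresponding main term without losing powers of $\log X$ is what makes the saving $X^{\varepsilon}$, i.e. the use of $\theta=1/5-\varepsilon$ rather than $\theta=1/5$, genuinely necessary.
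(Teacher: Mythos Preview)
Your outline is in the right spirit and shares the two essential ingredients with the paper --- an exponential-sum lower bound over primes and a sieve-theoretic upper bound --- but the organization is genuinely different, and two of your steps are considerably vaguer than the paper's.

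\textbf{Structural comparison.} The paper does not run a Cauchy--Schwarz second moment on $\sum_p\mathbf 1_{\A_X^{(p)}}$. Instead it works with the cumulative counting function $F_N(\alpha)$ and proves (i) a lower bound $\int_a^bF_N\ge(b-a)G_N(1+o(1))$ for every subinterval, and (ii) a \emph{pointwise} upper bound $F_N(\alpha)\le KG_N+J_N(\alpha)$ with $\int|J_N|=o(G_N)$, then quotes a black-box metric lemma (Lemma~1 of Harman--Jones) to conclude. Your $(\int f)^2/\int f^2$ argument is a legitimate alternative, but note that the cleanest way to bound $\int f^2$ is exactly via a pointwise bound on $f$ (since $\int f^2\le(\sup f)\int f$), and that pointwise bound is what the paper obtains by a three-dimensional upper-bound sieve on $n[n\alpha][nc\alpha]$. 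Your proposed route --- sieving directly on the pair $(r,r')$ in the off-diagonal --- is harder to execute, because you must simultaneously control primality of $r,r'$, two fractional-part conditions $\langle cr\rangle,\langle cr'\rangle\ll X^{-\theta}$, \emph{and} the linear link $|rp'-r'p|\ll X^{1-\theta}$; the paper sidesteps this by sieving the single variable $n$ with its two companions $[n\alpha],[nc\alpha]$.

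\textbf{The sequence of scales.} Your treatment of general irrational $c$ (``remove bad scales'') is the main gap. The paper makes this completely explicit: it works only along the sequence $\mathcal S=\{q_1^2,q_2^2,\dots\}$, where the $q_i$ are the continued-fraction denominators of $c$, so that at $N\in\mathcal S$ one has the fixed approximation $|c-a/\sqrt N|\le 1/N$. This single Diophantine input is fed into the standard lemma $\sum_{l\le L}\min(x/l,\|lc\|^{-1})\ll(x/q+L+q)\log(2Lqx)$, and with $q=\sqrt N$ all the Vaughan-type bilinear sums balance at $U=P^{2/5}$, yielding the $N^{4/5}$ saving and hence the exponent $1/5$. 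Your appeal to Vinogradov's bound for $\sum e(kcn)\Lambda(n)$ uniformly over $k\ll X^\theta$ needs, for each such $k$, a denominator for $kc$ in the window $[X^{2\theta},X^{1-2\theta}]$; you correctly note this holds for badly approximable $c$, but for arbitrary irrational $c$ you would need to specify the subsequence of scales just as the paper does.

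In short: your sketch would work once the off-diagonal is replaced by (or reduced to) a pointwise sieve bound on the counting function, and once the subsequence of admissible $X$ is pinned down via the continued fraction of $c$. Both fixes are exactly what the paper supplies.
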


The above theorem is in fact valid more generally for lines of the form $(\alpha, c\alpha + d)$ as long as $c > 1$ is irrational, with identical proof. Our approach is broadly based on the method followed by Harman-Jones. However, we use exponential sum estimates rather than zero density estimates for the Riemann zeta function.\\

\noindent The proof of Theorem \ref{super} is carried out in \S's $2, 3, 4$ and $5$. In \S $6$, we discuss some variations of the problem and open questions.

\section{A Metrical approach}

Let $F_N(\alpha)$ be the number of solutions to \eqref{simultan} with $p<N$ and for $0<A<B$ let
$$
G_N(A,B)= \frac{A^2}{4B} \cdot N^{3/5+\varepsilon}
(\log N)^{-2}. 
$$

We will prove

\begin{theorem} \label{Theo}
There exists an infinite sequence $\mathcal{S}$ of natural numbers $N$ such that the following hold.\\ 

(i) Let $0 < A < B$. Then for all $a,b$ with $A\le a<b\le B$ we have
\begin{equation} \label{LOL}
\int\limits_a^b F_N(\alpha) d\alpha \ge (b-a)G_N(A,B)(1+o(1))
\end{equation}
if $N\in \mathcal{S}$ and $N\rightarrow \infty$.\\

(ii) Let $0<A<B$ and $\varepsilon>0$. Then there exists a constant $K=K(A,B,\varepsilon)$ such that, for $\alpha\in [A,B]$, we have 
$$
F_N(\alpha)\le KG_N(A,B)+J_N(\alpha)
$$
with
$$
\int\limits_A^B \left|J_N(\alpha)\right| d\alpha=o\left(G_N(A,B)\right) \quad \mbox{as } N\rightarrow \infty
$$
if $N\in \mathcal{S}$ and $N\rightarrow \infty$. 
\end{theorem}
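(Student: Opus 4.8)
The plan is to analyze $F_N$ harmonically; throughout, $p$ and $r$ denote primes and $e(t)=e^{2\pi it}$. Since $\delta_p:=p^{-1/5+\varepsilon}<1$ for large $p$ (assume $0<\varepsilon<1/5$), every solution counted by $F_N(\alpha)$ is forced to have $r=\lfloor p\alpha\rfloor$ and $q=\lfloor pc\alpha\rfloor$, so with
$$g_p(\alpha):=\mathbf 1\bigl[\{p\alpha\}<\delta_p\bigr]\,\mathbf 1\bigl[\lfloor p\alpha\rfloor\ \text{prime}\bigr]\,\mathbf 1\bigl[\{pc\alpha\}<\delta_p\bigr]$$
one has $F_N(\alpha)=\sum_{p<N}g_p(\alpha)$. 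The first two factors confine $\alpha$ to the pairwise disjoint intervals $[r/p,(r+\delta_p)/p)$, $pa<r<pb$; the last factor I expand in a Fourier series in $e(pc\alpha)$, or rather replace by a Beurling--Selberg minorant (for (i)) resp.\ majorant (for (ii)) trigonometric polynomial of degree $H\asymp N^{1/5}$, whose constant term differs from $\delta_p$ by $O(1/H)=o(\delta_p)$.

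For part (i), integrating the minorant version of $g_p$ over $[a,b]$ and substituting $u=pc\alpha$ turns the contribution of a frequency $h$ into $c_p(h)\,p^{-1}\sum_{pa<r<pb}e(hcr)$, where $c_p(0)=(1-o(1))\delta_p^2$ and $|c_p(h)|\ll\min(\delta_p,|h|^{-1})^2$. The frequency $h=0$ gives $\sum_{p<N}c_p(0)\,p^{-1}\bigl(\pi(pb)-\pi(pa)\bigr)$, which by the prime number theorem and partial summation equals $(b-a)\sum_{p<N}\delta_p^2(\log p)^{-1}(1+o(1))$; being of order $N^{3/5+2\varepsilon}(\log N)^{-2}$, this exceeds $(b-a)G_N(A,B)(1+o(1))$. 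The terms $h\neq0$ contribute at most a constant multiple of $\sum_{p<N}p^{-1}\sum_{0<|h|\le H}\min(\delta_p,|h|^{-1})^2\bigl|\sum_{pa<r<pb}e(hcr)\bigr|$. Here the irrationality of $c$ and the sequence $\mathcal{S}$ come in: Vinogradov's estimate for linear exponential sums over primes (via Vaughan's identity) gives $\sum_{pa<r<pb}e(hcr)\ll p(\log p)^{O(1)}\bigl(q_h^{-1/2}+p^{-1/5}+(q_h/p)^{1/2}\bigr)$, where $q_h$ is the denominator of a good rational approximation to $hc$, and we take $\mathcal{S}$ to be the set of $N$ for which these savings simultaneously outweigh the main term for every $h\le H$. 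For $N\in\mathcal{S}$ the error is $o\bigl(N^{3/5+2\varepsilon}(\log N)^{-2}\bigr)$, which gives \eqref{LOL}.

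For part (ii), the majorant version yields, with $F_N^{\flat}(\alpha):=\sum_{p<N}\delta_p\,\mathbf 1\bigl[\{p\alpha\}<\delta_p,\ \lfloor p\alpha\rfloor\ \text{prime}\bigr]$, the pointwise bound $F_N(\alpha)\le(1+o(1))F_N^{\flat}(\alpha)+J_N^{(1)}(\alpha)$, where $J_N^{(1)}$ collects the frequencies $0<|h|\le H$. Expanding $\int_A^B|J_N^{(1)}|^2$ and treating the off-diagonal with the same exponential-sum input (or the large sieve) for $N\in\mathcal{S}$, Cauchy--Schwarz gives $\int_A^B|J_N^{(1)}|=o(G_N(A,B))$. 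For $F_N^{\flat}$, an upper-bound sieve detecting the primality of $\lfloor p\alpha\rfloor$ together with a Gallagher-type overlap (``quasi-independence'') estimate for the intervals $[r/p,(r+\delta_p)/p)$ shows $F_N^{\flat}(\alpha)\le K(A,B,\varepsilon)\,G_N(A,B)$ for all $\alpha$ outside a ``resonant'' set $\mathcal{R}_N$ of points lying within $\asymp N^{-6/5+\varepsilon}$ of a rational $a/q$ with $q$ near the critical scale $N^{1/5}$; a Farey-fraction count gives $|\mathcal{R}_N|\ll N^{-4/5+O(\varepsilon)}$, and since trivially $F_N^{\flat}\ll N^{4/5+\varepsilon}$ one has $\int_{\mathcal{R}_N}F_N^{\flat}\ll N^{O(\varepsilon)}=o(G_N(A,B))$. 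Taking $J_N:=J_N^{(1)}+F_N^{\flat}\mathbf 1_{\mathcal{R}_N}$ then furnishes the decomposition required in (ii).

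The main obstacle in both parts is the uniform control of the prime exponential sums $\sum_{pa<r<pb}e(hcr)$ over all $h$ up to $H\asymp N^{1/5}$: a single frequency $h$ for which $hc$ is abnormally close to a rational of small denominator would, by itself, produce an error comparable to the main term. This is exactly what forces the passage to the subsequence $\mathcal{S}$, and the delicate step is to show that $\mathcal{S}$ is infinite --- i.e.\ that the Diophantine obstructions coming from the continued fraction expansion of $c$ occur only at sparse scales, so good scales remain; the resonant-set estimate in (ii) is a second hurdle of the same nature.
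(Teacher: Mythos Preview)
Your outline for part (i) is essentially the paper's route: replace the indicator of $\{pc\alpha\}<\delta_p$ by a Beurling--Selberg/Vaaler polynomial, integrate to reduce to linear exponential sums over primes $\sum_{r\ \mathrm{prime}}e(hcr)$, and estimate these via Vaughan's identity. The real lacuna is the construction of $\mathcal{S}$, which you flag as ``the delicate step'' but do not carry out. In the paper this is not delicate at all: one simply takes $\mathcal{S}=\{q_1^2,q_2^2,\ldots\}$ where the $q_i$ are the continued-fraction denominators of $c$. For $N=q_i^2$ one then has a \emph{single} approximation $|c-a/q|\le q^{-2}$ with $q=\sqrt{N}$, and after Vaughan's identity every bilinear piece collapses (by combining $h$ with the other summation variables) to a sum of the shape $\sum_{r\le L}\min(x/r,\|rc\|^{-1})$, to which the standard bound $\ll (x/q+L+q)\log(2Lqx)$ applies with this one $q$. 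So you never need a separate $q_h$ for each frequency $h$; the problem of ``simultaneous savings for all $h\le H$'' dissolves once the variables are merged.

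For part (ii) your route diverges from the paper's and is considerably sketchier. The paper does not decouple the $c\alpha$-condition by a majorant; instead it relaxes $p$ to range over all integers $n\le N$ and applies a multi-dimensional upper-bound sieve directly to $n[n\alpha][nc\alpha]$, so that the only input needed is an asymptotic for $\#\{n\le N: d_1\mid n,\ d_2\mid[n\alpha],\ d_3\mid[nc\alpha],\ \max(\{n\alpha\},\{nc\alpha\})<\mu\}$. This is obtained by elementary Fourier analysis, and the sieve remainder $J_N(\alpha)$ is then a short double exponential sum whose integral over $[A,B]$ is bounded using only the geometric estimate $\int_A^B\min(K,\|x\alpha\|^{-1})\,d\alpha\ll\min\{K,\max(1,|x|^{-1})\log K\}$ together with the same continued-fraction input $q=\sqrt{N}$. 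No resonant-set excision, no Gallagher overlap lemma, and no $L^2$ argument are needed. Your scheme of bounding $F_N^\flat$ pointwise via overlap estimates is the fragile step: you have not explained why the intervals $[r/p,(r+\delta_p)/p)$ with $r$ prime should be quasi-independent away from a small exceptional set, and a heuristic count already shows the expected size of $F_N^\flat$ exceeds $G_N$ by a factor $N^{\varepsilon}$, so a pointwise bound $F_N^\flat\le K\,G_N$ cannot hold without first shrinking $\delta_p$ (as the paper does by taking $\mu=N^{-1/5+\varepsilon/2}$ rather than $N^{-1/5+\varepsilon}$). The paper's sieve-first approach sidesteps all of this.
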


Theorem \ref{Theo}(i) corresponds to Lemma 2, Theorem \ref{Theo}(ii) to Lemma 3 in \cite{HaJo}. The only difference is that $\mathcal{S}=\mathbb{N}$ in the said lemmas. However, it is sufficient to assume that \eqref{LOL} holds just for an infinite sequence of natural numbers $N$ rather than all $N$'s because we want to prove no more than the infinitude of triples $(p,q,r)$ satisfying \eqref{simultan}. Theorem \ref{Theo}, combined with Lemma $1$ in \cite{HaJo} will complete the proof of Theorem \ref{super}. 

\section{Proof of Theorem \ref{Theo}(i)}

\subsection{Construction of the sequence $\mathcal{S}$} \label{construction}
Let $q_1<q_2<q_3<...$ be the denominators in the continued fraction approximants for the irrational number $c$. We will set
$$
\mathcal{S}:=\{q_1^2,q_2^2,...\}.
$$
In the sequel, we shall suppose that $N\in \mathcal{S}$. 

\subsection{Reduction to a counting problem}
To prove Theorem \ref{Theo}(i), we broadly follow the approach in section 3 of \cite{HaJo} . However, we use exponential sum estimates instead of zero density estimates for the Riemann zeta function since they turn out to be more suitable for our purposes (this is the content of the next subsection).\\


Let 
$$
\mathcal{B}_p=\bigcup_{\substack{r\in \mathbb{P}\\ q\in \mathbb{N}}} \left[\left.\frac{r}{p},\frac{r+\eta}{p}\right)\right.\cap \left[\left. \frac{1}{c}\cdot \frac{q}{p},\frac{1}{c}\cdot \frac{q+\eta}{p}\right)\right.\cap [a,b],
$$
where $\eta=p^{\varepsilon/2-1/5}$. Then
\begin{equation} \label{integral}
\int\limits_{a}^{b} F_N(\alpha)d\alpha =\sum\limits_{\substack{p \in \mathbb{P}\\ p\le N}} \lambda(\mathcal{B}_p).
\end{equation}

\noindent Set
\begin{equation} \label{mudef}
\mu:=(a+b)/(2a).
\end{equation} 

\noindent Our strategy is to split the interval $[1,N]$ into subintervals $[P,P\mu]$ and sum up over the $P$'s in the end. Accordingly, we restrict $p$ to the interval $P\le p<P\mu$ with $P\mu\le N$. We then obtain a lower bound for \eqref{integral} by replacing $\eta$ with 
\begin{equation} \label{etadef}
\eta=(\mu P)^{-1/5+\varepsilon/2}.
\end{equation}

\noindent We note that if 
$$
\frac{r}{p}\le  \frac{1}{c}\cdot \frac{q}{p} \le \frac{r+\eta/2}{p},
$$
then 
$$
\lambda\left(\left[\left.\frac{r}{p},\frac{r+\eta}{p}\right)\right. \cap \left[\left.\frac{1}{c}\cdot \frac{q}{p},\frac{1}{c}\cdot \frac{q+\eta}{p}\right)\right.\right)\ge \nu,
$$
where
\begin{equation} \label{nudef}
\nu:=\frac{\eta}{\mu P}\min \left(\frac{1}{2}, \frac{1}{c}\right)=(\mu P)^{-6/5+\varepsilon/2}\min \left(\frac{1}{2}, \frac{1}{c}\right).
\end{equation}
Also, for all $p\in [P,P \mu)$,
$$
Pa\mu\le r\le bP \Longrightarrow a\le \frac{r}{p}\le b,
$$
and $r$ here runs over the primes in an interval of length $\frac{b-a}{2}P$. We thus have 
\begin{equation} \label{key}
\sum\limits_{P\le p< \mu P} \lambda(\mathcal{B}_p) \ge \nu N(P),
\end{equation}
where $N(P)$ counts the number of solutions $(q,p,r)\in \mathbb{Z}\times \mathbb{P} \times \mathbb{P}$ to 
$$
q\in \left[ \left. cr, cr+\delta\right.\right), \quad P\le p< P \mu, \quad Pa\mu\le r\le bP,  
$$
where
\begin{equation} \label{alphadef}
\delta:=\frac{c\eta}{2}=\frac{c}{2(\mu P)^{1/5-\varepsilon/2}}.
\end{equation}

\noindent Note that in contrast to the problem considered by Harman and Jones, the conditions on $p$ and  $q$ are here independent, which simplifies matters to some extent. By the prime number theorem, the number $R(P)$ of prime solutions to
$$
P\le p < P \mu, 
$$
satisfies 
\begin{equation} \label{RP}
R(P) \sim (\mu-1)P(\log P)^{-1}  \mbox{ as } P \rightarrow \infty.
\end{equation}
It remains to count the number of solutions $(q,r)\in \mathbb{N}\times \mathbb{P}$ to
$$
q\in \left[ \left. cr, cr+\delta\right.\right), \quad Pa\mu\le r\le bP,
$$ 
which equals
$$
S(P):=\sum\limits_{\substack{Pa\mu \le r \le bP\\ r \ prime}} \left([-cr]-[-(cr+\delta)]\right).
$$
Let
$$
T(P):=\sum\limits_{Pa\mu \le n \le bP} \left([-cn]-[-(cn+\delta)]\right)\Lambda(n).
$$
We aim to show that 
\begin{equation} \label{TP}
T(P) = \delta(b-a\mu)P(1+o(1)) + O\left(N^{4/5+\varepsilon/3}\right)  \quad \mbox{if } P\mu\le N.
\end{equation}
As usual, from \eqref{TP}, it follows that
$$
S(P) = \delta(b-a\mu)P (\log P)^{-1}(1+o(1)) + O\left(N^{4/5+\varepsilon/3}\right) \quad \mbox{if } P\mu\le N,
$$
which together with \eqref{RP}  gives
$$
N(P) = R(P)S(P)= \delta (b-a\mu)(\mu-1) P^{2} (\log P)^{-2}(1+o(1))+ O\left(N^{9/5+\varepsilon/3}\right) \quad \mbox{if } P\mu\le N.
$$
Combing this with \eqref{mudef}, \eqref{etadef}, \eqref{nudef}, \eqref{key} and \eqref{alphadef}, we obtain
\begin{equation} \label{near}
\begin{split}
\sum\limits_{P\le p< \mu P} \lambda(\mathcal{B}_p)\ge & \frac{(b-a)^2}{4a}\cdot \min \left(\frac{c}{4},\frac{1}{2}\right) \cdot (\mu P)^{-7/5+\varepsilon} P^2 (\log P)^{-2}(1+o(1))+\\
& O\left(\delta N^{3/5+5\varepsilon/6}\right) \quad \mbox{if } P\mu\le N.
\end{split}
\end{equation}

By splitting the interval $[1,N)$ into intervals of the form $[P,\mu P)$ and summing up, it now follows from \eqref{integral} and \eqref{near} that
\begin{equation*}
\begin{split}
\int\limits_{a}^{b} F_N(\alpha)d\alpha \ge & \frac{(b-a)^2}{4a}\cdot \min \left(\frac{c}{4},\frac{1}{2}\right) \cdot \left( \sum\limits_{k=0}^{\infty} 
\left(\frac{N}{\mu^k}\right)^{-7/5+\varepsilon} \left(\frac{N}{\mu^{k+1}}\right)^2 (\log N)^{-2}\right)(1+o(1))\\
= & \frac{(b-a)^2}{4a}\cdot \min \left(\frac{c}{4},\frac{1}{2}\right) \cdot \mu^{-2} \cdot \frac{1}{1-\mu^{-(3/5+\varepsilon)}} \cdot 
N^{3/5+\varepsilon} (\log N)^{-2}(1+o(1)).
\end{split}
\end{equation*}
Further, since $\mu>1$, we have 
$$
1-\mu^{-(3/5+\varepsilon)}\le \left(\frac{3}{5}+\varepsilon\right)(\mu-1)= \left(\frac{3}{5}+\varepsilon\right)\cdot \frac{b-a}{2a}.
$$
Hence, we deduce that 
\begin{equation}
\begin{split}
\int\limits_{a}^{b} F_N(\alpha)d\alpha \ge & (b-a) \cdot \frac{a^2}{a+b} \cdot \frac{1}{2\left(\frac{3}{5}+\varepsilon\right)}\cdot \min\left(c,2\right) \cdot N^{3/5+\varepsilon}
(\log N)^{-2}(1+o(1))\\ \ge & 
(b-a) \cdot \frac{a^2}{4b} \cdot N^{3/5+\varepsilon} \ge (b-a) \cdot \frac{A^2}{4B} \cdot N^{3/5+\varepsilon}
(\log N)^{-2}(1+o(1)), 
\end{split}
\end{equation}
provided that $\varepsilon\le 2/5$, establishing the claim of Theorem \ref{Theo}(i).  It remains to prove \eqref{TP}.

\section{Reduction to exponential sums}
For $x\in \mathbb{R}$ let
$$
\psi(x):=x-[x]-\frac{1}{2}.
$$
Then we may write $T(P)$ in the form
$$
T(P)=\delta \sum\limits_{Pa\mu \le n \le bP} \Lambda(n) + \sum\limits_{Pa\mu \le n \le bP} (\psi(-cn)-\psi(-(cn+\delta)))\Lambda(n).
$$
By the prime number theorem,
$$
\delta \sum\limits_{Pa\mu \le n \le bP} \Lambda(n) \sim \delta(b-a\mu)P \quad \mbox{as } P\rightarrow \infty.
$$
Hence, to establish \eqref{TP}, it suffices to prove that for any fixed $\varepsilon>0$ a bound of the form
\begin{equation} \label{error}
\sum\limits_{Pa\mu \le n \le bP} (\psi(-cn)-\psi(-(cn+\delta)))\Lambda(n)=O\left(N^{4/5+\varepsilon}\right) \quad \mbox{if } P\mu\le N
\end{equation}
holds. We reduce the left-hand to exponential sums, using the following Fourier analytic tool developed by Vaaler \cite{Vaal}.

\begin{lemma}[Vaaler] \label{Vaaler} For $0<|t|<1$ let
$$
W(t)=\pi t(1-|t|) \cot \pi t +|t|.
$$
Fix a positive integer $J$. For $x\in \mathbb{R}$ define 
$$
\psi^{\ast}(x):=-\sum\limits_{1\le |j|\le J} (2\pi i j)^{-1}W\left(\frac{j}{J+1}\right)e(jx)
$$
and
$$
\delta(x):=\frac{1}{2J+2} \sum\limits_{|j|\le J} \left(1-\frac{|j|}{J+1}\right)e(jx).
$$
Then $\delta(x)$ is non-negative, and we have 
$$
|\psi^{\ast}(x)-\psi(x)|\le \delta(x)
$$
for all real numbers $x$. 
\end{lemma}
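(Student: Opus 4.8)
The plan is to realise $\psi^{\ast}$ as the average of a majorant $\psi_{+}$ and a minorant $\psi_{-}$ of $\psi$, both one-periodic trigonometric polynomials of degree at most $J$. Concretely, I would first construct $\psi_{\pm}$ with the sandwiching property $\psi_{-}(x)\le \psi(x)\le \psi_{+}(x)$ for all real $x$ --- where at the integers, at which $\psi$ jumps by $1$, this is read as $\psi_{+}(n)\ge \tfrac12$ and $\psi_{-}(n)\le -\tfrac12$ so that both one-sided limits are captured --- and with $\int_{0}^{1}\bigl(\psi_{+}(x)-\psi_{-}(x)\bigr)\,dx=\tfrac{1}{J+1}$. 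Granting this, set $\psi^{\ast}=\tfrac12(\psi_{+}+\psi_{-})$ and $\delta=\tfrac12(\psi_{+}-\psi_{-})$. Then $\delta$ is a nonnegative trigonometric polynomial of degree $\le J$, being half the difference of a majorant and a minorant of the same function, and $|\psi^{\ast}(x)-\psi(x)|\le\tfrac12\bigl(\psi_{+}(x)-\psi_{-}(x)\bigr)=\delta(x)$ is immediate from the sandwiching. What then remains is to produce $\psi_{\pm}$ with the right degree and mean and to compute their Fourier coefficients so as to read off the stated shapes of $\psi^{\ast}$ and $\delta$.

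For this I would use the Beurling--Selberg extremal function for $\mathrm{sgn}$: there is an entire function $B(z)$ of exponential type $2\pi$ with $B(x)\ge\mathrm{sgn}(x)$ for all real $x$, interpolating $\mathrm{sgn}$ at the nonzero integers, with $\int_{\R}\bigl(B(x)-\mathrm{sgn}(x)\bigr)\,dx=1$, and whose Fourier transform is supported in $[-1,1]$; reflecting gives the minorant $-B(-x)\le\mathrm{sgn}(x)$. From $B$ one builds in the standard way entire functions of exponential type $2\pi$ majorising and minorising the non-periodic sawtooth (whose $1$-periodisation is $\psi$), by integrating an appropriate combination of translates and derivatives of $B$ against the Fej\'er kernel $\bigl(\tfrac{\sin\pi z}{\pi z}\bigr)^{2}$; these agree with the sawtooth at the integers up to its jump. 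Rescaling $z\mapsto(J+1)z$ converts them into entire majorant and minorant functions of exponential type $2\pi(J+1)$ with $O(x^{-2})$ decay.

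Periodising by Poisson summation then yields the required $\psi_{\pm}$: summing the rescaled extremal functions over integer translates gives one-periodic functions whose $j$-th Fourier coefficient equals the Fourier transform of the rescaled extremal function at $j$. Since that transform is supported in $[-(J+1),J+1]$ and vanishes at the endpoints, only $|j|\le J$ survive, so $\psi_{\pm}$ are trigonometric polynomials of degree $\le J$; Poisson summation also preserves the sandwiching and evaluates $\int_{0}^{1}(\psi_{+}-\psi_{-})$ as the rescaled value $\tfrac{1}{J+1}$ of $\int_{\R}(B-\mathrm{sgn})$. Working out the Fourier coefficients, the part common to $\psi_{+}$ and $\psi_{-}$ is the Fourier coefficient $-\tfrac{1}{2\pi i j}$ of $\psi$ multiplied by the smoothing factor coming from the Fourier transform of $B$ after the Fej\'er convolution and rescaling, and this factor is exactly $W\bigl(\tfrac{j}{J+1}\bigr)$ with $W(t)=\pi t(1-|t|)\cot\pi t+|t|$; this gives the stated formula for $\psi^{\ast}$. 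The remaining half-difference $\delta$ comes out as $\tfrac{1}{2(J+1)}$ times the degree-$J$ Fej\'er kernel, i.e. $\tfrac{1}{2J+2}\sum_{|j|\le J}\bigl(1-\tfrac{|j|}{J+1}\bigr)e(jx)$, which is visibly nonnegative.

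The main obstacle is the input from extremal-function theory: the construction of $B$ and, above all, the verification of its two substantive properties --- the pointwise bound $B(x)\ge\mathrm{sgn}(x)$ and the sharp mass $\int_{\R}(B-\mathrm{sgn})=1$, the latter via a quadrature/interpolation identity at the integers which also pins down the interpolation conditions used above --- followed by the careful bookkeeping that transports the Fourier transform of $B$ through the Fej\'er smoothing, the scaling by $J+1$, and Poisson summation to produce precisely the weight $W$ in the statement. The analytic loose ends (convergence of the series, legitimacy of Poisson summation) are routine, as the extremal functions decay like $x^{-2}$. Since Lemma~\ref{Vaaler} is exactly Vaaler's theorem from \cite{Vaal}, one may alternatively just invoke that reference; the sketch above records the argument for completeness.
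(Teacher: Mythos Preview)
Your sketch is correct and follows the standard Beurling--Selberg/Vaaler construction. The paper, however, does not give an argument at all: it simply records that this is Theorem~A6 in Graham--Kolesnik \cite{GrKo}, with origin in Vaaler \cite{Vaal}, and moves on. So your proposal is strictly more than what the paper does; the final sentence of your write-up (``one may alternatively just invoke that reference'') is in fact the entirety of the paper's proof. If you wish to match the paper, a bare citation suffices; if you wish to include the sketch for completeness, what you have written is accurate, though you should be aware that the details you flag as the ``main obstacle'' (the extremal properties of $B$ and the Fourier bookkeeping producing exactly $W$) are nontrivial and are themselves usually delegated to \cite{Vaal} or \cite{GrKo}.
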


\begin{proof}
This is Theorem A6 in \cite{GrKo} and has its origin in \cite{Vaal}.
\end{proof}

It follows that
\begin{equation} \label{S123}
\sum\limits_{Pa\mu \le n \le bP} (\psi(-cn)-\psi(-(cn+\delta)))\Lambda(n)=S_1+O(S_2+S_3)
\end{equation}
for any positive integer $J$, where
$$
S_1:=-\sum\limits_{1\le |j|\le J} (2\pi i j)^{-1}W\left(\frac{j}{J+1}\right)(1-e(j\delta))\sum\limits_{Pa\mu \le n \le bP} e(-jcn)\Lambda(n),
$$
$$
S_2:=\frac{1}{2J+2} \sum\limits_{1\le |j|\le J} \left(1-\frac{|j|}{J+1}\right)(1-e(j\delta))\sum\limits_{Pa\mu \le n \le bP} e(-jcn)\Lambda(n)
$$
and
$$
S_3:=\frac{1}{2J+2} \cdot (1-e(j\delta))\sum\limits_{Pa\mu \le n \le bP} \Lambda(n).
$$
We choose
$$
J:=P^{1/5}.
$$
Then
\begin{equation} \label{S3}
S_3\ll P^{4/5}\ll N^{4/5}. 
\end{equation}

We treat $S_1$ and $S_2$ at once by observing that
\begin{equation} \label{S12}
|S_1|+|S_2|\ll \sum\limits_{1\le j\le J} \frac{1}{j} \cdot \left| \sum\limits_{Pa\mu \le n \le bP} e(jcn)\Lambda(n)\right| \ll 
(\log P) \max\limits_{1\le H\le J} \frac{1}{H} \cdot Z(H),
\end{equation}
where
\begin{equation} \label{ZH}
Z(H):=\sum\limits_{H\le h\le 2H} \left| \sum\limits_{Pa\mu \le n \le bP} e(hcn)\Lambda(n)\right|. 
\end{equation}
Now it remains to estimate $Z(H)$.

\subsection{Application of Vaughan's identity}
We further convert the inner sum involving the von Mangoldt function on the right-hand side of \eqref{ZH}  into bilinear sums using Vaughan's identity.

\begin{lemma}[Vaughan] \label{Vaughan}
Let $U\ge 1$, $V\ge 1$, $UV\le x$. Then we have for every arithmetic function $f: \mathbb{N}\rightarrow \mathbb{C}$ the estimate
$$
\sum\limits_{U<n\le x} f(n)\Lambda(n) \ll (\log 2x)T_1+T_2
$$
with
$$
T_1:= \sum\limits_{l\le UV} \max\limits_{w} \left| \sum\limits_{w\le k\le x/l} f(kl) \right|,
$$
$$
T_2:=\left| \sum\limits_{U<m\le x/V} \sum\limits_{V<k\le x/m} \Lambda(m)b(k) f(mk) \right|,
$$
where $b(k)$ is an arithmetic function which only depends on $V$ and satisfies the inequality $b(k)\le \tau(k)$, $\tau(k)$ being the number of divisors of $k$.
\end{lemma}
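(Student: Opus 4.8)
The plan is to deduce the Type~I/Type~II decomposition from Vaughan's classical identity for the von Mangoldt function, which is itself standard (it can be found in Vaughan's original paper, or in textbook treatments such as those of Davenport and of Iwaniec and Kowalski); I indicate the argument for completeness. First I would record the pointwise identity. Writing $F(s)=\sum_{m\le U}\Lambda(m)m^{-s}$ and $G(s)=\sum_{d\le V}\mu(d)d^{-s}$, one starts from the formal Dirichlet-series identity
\[
-\frac{\zeta'}{\zeta}=F-\zeta FG-\zeta'G+\Bigl(-\frac{\zeta'}{\zeta}-F\Bigr)(1-\zeta G),
\]
which is verified by expanding and cancelling the right-hand side. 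Reading off the coefficient of $n^{-s}$ for $n>U$, where the stand-alone $F$ contributes nothing, yields for every $n>U$ the decomposition
\[
\Lambda(n)=\sum_{\substack{d\mid n\\ d\le V}}\mu(d)\log\frac{n}{d}-\sum_{\substack{abc=n\\ b\le U,\ c\le V}}\Lambda(b)\mu(c)-\sum_{\substack{mk=n\\ m>U,\ k>V}}\Lambda(m)\sum_{\substack{d\mid k\\ d\le V}}\mu(d),
\]
using that $\sum_{d\mid k,\ d\le V}\mu(d)$ vanishes for $1<k\le V$.

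Next I would multiply by $f(n)$, sum over $U<n\le x$, and interchange the order of summation in each of the three pieces. For the first piece, writing $n=d\ell$ with $d\le V$, I would strip the weight $\log(n/d)=\log\ell$ by Abel summation over $\ell$, at a cost of a factor $\ll\log 2x$; this bounds its contribution by $\ll(\log 2x)\sum_{d\le V}\max_w\bigl|\sum_{w\le\ell\le x/d}f(d\ell)\bigr|\le(\log 2x)T_1$, since $d\le V\le UV$. For the second piece I would group by $l=bc\le UV$ and use that the resulting coefficient satisfies $\bigl|\sum_{bc=l,\ b\le U,\ c\le V}\Lambda(b)\mu(c)\bigr|\le\sum_{b\mid l}\Lambda(b)=\log l\le\log 2x$, so that taking the maximum over the truncation point again gives a bound $\ll(\log 2x)T_1$. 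The third piece, with $b(k):=\sum_{d\mid k,\ d\le V}\mu(d)$ — which depends only on $V$ and satisfies $|b(k)|\le\sum_{d\mid k}1=\tau(k)$ — becomes, after writing $n=mk$ and noting that $m>U$ already forces $n>U$ while $k>V$ and $mk\le x$ force $m\le x/V$, exactly $T_2$ up to an absolute value and a sign. Adding the three estimates gives the claimed bound.

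The algebraic identity is pure bookkeeping, and the reduction to $T_1$ and $T_2$ is routine; the one place calling for a little care is the Abel summation that removes the logarithm from the first sum, together with the check that every coefficient produced when grouping the Type~I terms is $O(\log 2x)$, so that no divisor-sum loss beyond the $\tau(k)$ already built into $T_2$ is incurred. I expect that partial-summation step to be the only mildly delicate point, and it is entirely standard.
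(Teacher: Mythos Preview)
Your derivation is correct and is precisely the standard proof of Vaughan's identity via the Dirichlet-series decomposition. The paper itself gives no argument at all, merely citing Satz~6.1.2 of Br\"udern's \textit{Einf\"uhrung in die analytische Zahlentheorie} (which in turn goes back to Vaughan's original note), so there is nothing further to compare.
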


\begin{proof}
This is Satz 6.1.2. in \cite{Brud} and has its origin in \cite{Vaug}.
\end{proof}

We use Lemma \ref{Vaughan} with parameters $U$ and $V$ satisfying $1\le U=V\le (Pa\mu)^{1/2}$, to be fixed later, $x:=bP$ and 
$$
f(n):=\begin{cases} e(hcn) & \mbox{ if } Pa\mu \le n\le bP, \\ 0 & \mbox{ if } n<Pa\mu \end{cases}
$$
to deduce that
\begin{equation} \label{ZHest}
Z(H)\ll (\log P)Z_1(H)+Z_2(H),
\end{equation}
where
$$
Z_1(H):=\sum\limits_{H\le h\le 2H} \sum\limits_{l\le U^2} \max\limits_{Pa\mu/l \le w\le bP/l} \left| \sum\limits_{w\le k\le bP/l} e(hckl) \right|
$$
and 
$$
Z_2(H):=\sum\limits_{H\le h\le 2H} \left| \sum\limits_{U<m\le bP/U} \sum\limits_{Pa\mu/m\le k\le bP/m} \Lambda(m)b(k) e(hcmk) \right|
$$

Obviously,
\begin{equation} \label{Z1H}
\begin{split}
Z_1(H)\ll & \sum\limits_{H\le h\le 2H} \sum\limits_{l\le U^2} \min\left(\frac{P}{l}, ||\alpha hl||^{-1}\right) \le \sum\limits_{r\le 2HU^2} \tau(r) \min\left(\frac{HP}{r}, ||\alpha r||^{-1}\right)\\
\ll & P^{\varepsilon} \sum\limits_{r\le 2HU^2} \min\left(\frac{HP}{r}, ||\alpha r||^{-1}\right).
\end{split}
\end{equation}
We shall boil down $Z_2(H)$ to similar terms. Rearranging the summation gives 
$$
Z_2(H)=\sum\limits_{H\le h\le 2H} \left| \sum\limits_{a\mu U/b\le k\le bP/U} b(k) \sum\limits_{M_1(k) <m\le M_2(k)}  \Lambda(m) e(hcmk) \right|
$$
with 
$$
M_1(k):=\max([U]+1,Pa\mu/k) \quad \mbox{and} \quad M_2(k):=\min(bP/U,bP/k).
$$
We now observe that
\begin{equation} \label{Z2HK}
Z_2(H)\ll (\log P) \max\limits_{U\le K\le bP/U} Z(H,K),
\end{equation}
where
$$
Z(H,K) := \sum\limits_{H\le h\le 2H} \sum\limits_{K\le k\le 2K} b(k) \cdot \left| \sum\limits_{\max([U]+1,Pa\mu/k)<m\le \min(bP/U,bP/k)} \Lambda(m) e(hcmk) \right|. 
$$
Using the Cauchy-Schwarz inequality and the well-known bound
$$
\sum\limits_{K\le k\le 2K} |b(k)|^2 \ll K(\log 2K)^3 
$$ 
and expanding the square, we obtain 
\begin{equation} \label{aftercauchy}
\begin{split}
& Z(H,K)^2 \\ \ll
& H \left(\sum\limits_{K\le m\le 2K} |b(k)|^2\right)  
\sum\limits_{H\le h\le 2H} \sum\limits_{K\le k\le 2K} \left| \sum\limits_{M_1(k)\le m\le M_2(k)} \Lambda(m) e(hcmk) \right|^2\\
\ll & HK(\log 2K)^3 \sum\limits_{H\le h\le 2H} \sum\limits_{K\le k\le 2K}  \sum\limits_{M_1(k)\le m\le M_2(k)} \Lambda(m)^2 +\\
& HK(\log 2K)^3 \sum\limits_{H\le h\le 2H} \left| \sum\limits_{K\le k\le 2K}  \sum\limits_{M_1(k)\le m_1<m_2\le M_2(k)} \Lambda(m_1)\Lambda(m_2)e\left(hc(m_1-m_2)k\right)\right|\\
\ll & H^2KP(\log 2P)^5+ HK(\log 2P)^5\times\\ &  \sum\limits_{H\le h\le 2H} \sum\limits_{U\le m_1<m_2\le bP/K} \left| \sum\limits_{\max(K,Pa\mu/m_1)\le k\le \min(2K,bP/m_2)} e\left(hc(m_1-m_2)k\right)\right|\\
\ll & H^2KP(\log 2P)^5+ HK(\log 2P)^5\times\\ &  \sum\limits_{H\le h\le 2H} \sum\limits_{U\le m_1<m_2\le bP/K} \min\left(K,||hc(m_1-m_2)||^{-1}\right)\\
\ll & H^2KP(\log 2P)^5+ HP(\log 2P)^5 \sum\limits_{r\le 2HbP/K} \tau(r) \min\left(K,||rc|^{-1}\right)\\
\ll & H^2KP(\log 2P)^5+ HP^{1+\varepsilon} \sum\limits_{r\le 2HbP/K} \min\left(\frac{HP}{r},||rc|^{-1}\right).
\end{split}
\end{equation}

\subsection{Completion of the proof}
In both the treatments of $Z_1(H)$ and $Z_2(H)$, sums of the form 
$$
R_c(L,x):=\sum\limits_{l\le L} \min\left(\frac{x}{l},||rc||^{-1}\right).
$$
Their estimation is standard and depends on the Diophantine properties of $c$. We quote the following well-known result.

\begin{lemma} \label{standard} Let $L\ge 1$ and $x>1$. Suppose that $|c-a/q|\le q^{-2}$ with $a\in \mathbb{Z}$, $q\in \mathbb{N}$ and $(a,q)=1$. Then
$$
R_c(L,x)\ll \left(\frac{x}{q}+L+q\right)(\log 2Lqx).
$$
\end{lemma}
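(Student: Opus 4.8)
The plan is to split the sum $R_c(L,x)=\sum_{l\le L}\min(x/l,\|lc\|^{-1})$ into dyadic blocks according to the size of the ``denominator'' $l$, and then within each block partition the range of $l$ into consecutive windows of length $q$, using the Diophantine approximation $|c-a/q|\le q^{-2}$ to control how the values $\|lc\|$ distribute. (Note that as written the summand involves $\|rc\|$ but the variable of summation is $l$; I read this as $\|lc\|$, matching the form in which the sum actually arose in \eqref{Z1H} and \eqref{aftercauchy}.) First I would dispose of the trivial range $l\le q$: here I simply bound $\min(x/l,\|lc\|^{-1})\le x/l$ and sum, giving $\ll (x/1+x/2+\cdots)\ll x\log 2L$, which is absorbed into the stated bound since $x/q\cdot\log(2Lqx)$ dominates once, say, we also keep the $\log$ factor; more carefully I would only peel off $l\le q$ crudely when $q$ is small and otherwise fold it into the window argument below.

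The heart of the argument is the window estimate. For $l$ ranging over an interval $I$ of length $q$, write $l=l_0+t$ with $0\le t<q$; then $lc = l_0 c + t(a/q) + t\theta$ with $|\theta|\le q^{-2}$, so $|t\theta|<1/q$, and as $t$ runs through $\{0,1,\dots,q-1\}$ the fractions $t a/q \bmod 1$ run exactly through the set $\{0,1/q,\dots,(q-1)/q\}$ (since $(a,q)=1$), each hit once. Hence the quantities $\|lc\|$, $l\in I$, are — up to the harmless perturbation of size $<1/q$ plus the fixed shift $l_0c$ — essentially a translate of $\{0,1/q,\dots\}$, so at most $O(1)$ of them lie in any interval of length $1/q$ around a given point. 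Standard manipulation then gives $\sum_{l\in I}\min(y,\|lc\|^{-1})\ll y + q\log 2q$ for any $y>0$ (the $y$ accounting for the one or two terms where $\|lc\|$ is smallest, the $q\log q$ from summing $q/j$ over $j=1,\dots,q$). I would apply this with $y$ taken to be the relevant dyadic value of $x/l$: over a dyadic block $L'<l\le 2L'$ with $L'\ge q$ there are $\ll L'/q$ windows, so that block contributes $\ll (L'/q)(x/L' + q\log 2q)\ll (x/q)\log 2q + L'\log 2q$. Summing over the $\ll\log 2L$ dyadic blocks up to $L$ yields $\ll (x/q)\log 2L\log 2q + L\log 2L$, and adding back the $l\le q$ contribution and the trivial $q$ term from the shortest window gives the claimed $R_c(L,x)\ll (x/q+L+q)\log 2Lqx$.

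The main obstacle — really the only subtle point — is making the window count rigorous when the interval length is not an exact multiple of $q$ and when $L<q$, and in particular justifying that the perturbation $t\theta$ together with the fixed offset cannot conspire to pile up more than $O(1)$ values of $\|lc\|$ near $0$: one must check that after translating, two indices $t,t'$ with $t\ne t'$ give $\|(t-t')a/q\|\ge 1/q$, so their $\|lc\|$-values differ by at least $1/q - 2/q$... which is not automatically positive, so the cleaner route is to note that within a window the values $l c \bmod 1$ are $q$ points that are pairwise separated by at least $1/q - 1/q$ only after one is slightly careful; the standard fix (see e.g. the treatment in Vaughan's or Iwaniec–Kowalski's books) is to bound, for each $j\in\{1,\dots,\lfloor q/2\rfloor\}$, the number of $l\in I$ with $j/q\le\|lc\|<(j+1)/q$ by $O(1)$, which follows from $(a,q)=1$ and $|t\theta|<1/q$ directly, and then sum $\min(y,q/j)$ over $j$. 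Once that counting lemma is in hand the rest is bookkeeping over dyadic blocks, and I would present it as such rather than grinding through constants.
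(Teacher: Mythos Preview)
The paper does not prove this lemma at all; it simply cites it as Lemma~6.4.4 in Br\"udern's textbook. Your sketch is the standard argument found there (and in Vaughan, Lemma~2.2), so you are supplying more than the paper does. You also correctly catch the typo $\|rc\|\to\|lc\|$.

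There is one genuine gap: the range $l\le q$. Your crude bound $\sum_{l\le q}x/l\ll x\log 2q$ is too large by a factor of~$q$, and ``folding it into the window argument'' does not help either, since your window estimate $\sum_{l\in I}\min(y,\|lc\|^{-1})\ll y+q\log 2q$ takes a single $y$ over the whole window; on the first window $x/l$ ranges from $x$ down to $x/q$, and taking $y=x$ gives only $\ll x+q\log q$. The standard fix exploits the sharper perturbation bound $|l\beta|\le l/q^2$ (not merely $<1/q$) available when $l\le q$: for the at most two indices $l<q$ with $la\equiv\pm1\pmod q$ one then has $\|lc\|\ge 1/q-l/q^2=(q-l)/q^2$, whence $\min(x/l,\|lc\|^{-1})\le\min(x/l,q^2/(q-l))\ll x/q+q$; the index $l=q$ contributes $\le x/q$; and the remaining $l$'s (those with $\|la/q\|\ge 2/q$) contribute $\ll\sum_{j\ge 2}q/(j-1)\ll q\log q$ exactly as you say. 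This yields $\ll(x/q+q)\log 2q$ for the first block, after which summing $\sum_{h=1}^{\lfloor L/q\rfloor}(x/(hq)+q\log q)$ directly over the later windows---no dyadic splitting is needed---gives the lemma.
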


\begin{proof} This is Lemma 6.4.4. in \cite{Brud}
\end{proof} 

Recall the construction of the set $\mathcal{S}$ in subsection \ref{construction}. Since $N\in \mathcal{S}$, we have $q=\sqrt{N}\in \mathbb{N}$, and $q$ is the denominator of a continued fraction approximant for $c$, and thus $|c-a/q|\le q^{-2}$, as required in Lemma \ref{standard}. Now, combing \eqref{aftercauchy}, Lemma \ref{standard} and taking into account that $P\ll N$, $H\le J=P^{1/5}\ll N^{1/5}$ and $q=\sqrt{N}$, we obtain 
$$
Z(H,K)\ll N^{\varepsilon}\left(H(PK)^{1/2}+HPN^{-1/4}+HPK^{-1/2}+(HP)^{1/2}N^{1/4}\right), 
$$
which together with \eqref{Z2HK} gives
\begin{equation} \label{Z2Hendest}
Z_2(H)\ll N^{\varepsilon}\left(HPN^{-1/4}+HPU^{-1/2}+(HP)^{1/2}N^{1/4}\right).
\end{equation}
Similarly, \eqref{Z1H}, Lemma \ref{standard} and $q=\sqrt{N}$ imply
\begin{equation} \label{Z1Hendest}
Z_2(H)\ll N^{\varepsilon}\left(HPN^{-1/2}+U^2+N^{1/2}\right).
\end{equation}
Combining \eqref{ZHest}, \eqref{Z2Hendest} and \eqref{Z1Hendest}, taking into account that $P\ll N$ and choosing $U:=P^{2/5}$, we get
$$
Z(H)\ll HN^{4/5+\varepsilon},
$$
which together with \eqref{S12} gives
\begin{equation} \label{S12esti}
|S_1|+|S_2|\ll N^{4/5+\varepsilon}.
\end{equation}
Finally, combining \eqref{S123}, \eqref{S3} and \eqref{S12esti}, we deduce that \eqref{error} holds, which completes the proof of Theorem \ref{Theo}(ii).

\section{Proof of Theorem \ref{Theo}(ii)} 

\subsection{Sieve theoretical approach}
We are broadly following the treatment of \cite{HaJo} with appropriate modifications because the linear case, considered here, requires a different treatment. In particular, as in the previous section, the Diophantine properties of $c$ will come into play. Let $\{\cdot\}$ represent the fractional part, and put
$$
\mu:=N^{\varepsilon-1/5}.
$$
Write
$$
\mathcal{A}=\mathcal{A}(\alpha)=\{n[n\alpha][nc\alpha] \ :\  1\le n\le N, \ \{n\alpha\}<\mu, \ \{nc\alpha\}<\mu\}.
$$
We desire to show that $\mathcal{A}$ does not contain too many products of three primes. To this end, we apply a three-dimensional upper bound sieve (see \cite{HaRi}, Theorem 5.2). We therefore need to obtain an asymptotic formula for the number of solutions to 
$$
n[n\alpha][nc\alpha]\equiv 0 \bmod{d}, \ 1\le n\le N,
$$
with 
\begin{equation} \label{max}
\max(\{n\alpha\},\{nc\alpha\})<\mu,
\end{equation}
for 
$$
d\le D:=N^{\varepsilon}.
$$
For this it suffices to establish a formula for the number of solutions, say $S(\alpha;d_1,d_2,d_3)$, to
$$
n \equiv 0 \bmod{d_1}, \quad [n\alpha]\equiv 0 \bmod{d_2}, \quad [nc\alpha]\equiv 0 \bmod{d_2}, \quad 1\le n\le N
$$
subject to \eqref{max}. We can combine \eqref{max} with the congruence conditions to require
\begin{equation} \label{threeconds}
1\le n\le \frac{N}{d_1}, \quad \left\{\frac{nd_1\alpha}{d_2}\right\}<\frac{\mu}{d_2}, \quad \left\{\frac{nd_1c\alpha}{d_3}\right\}<\frac{\mu}{d_3}.
\end{equation}
We can count the number of solutions to \eqref{threeconds} using Fourier analysis. Let $L=D^4\mu^{-1}$. By the method of section 4 in \cite{Harm} we have
$$
S(\alpha;d_1,d_2,d_3)=\frac{N\mu^2}{d_1d_2d_3}+O\left(\frac{N\mu^2}{D^4}+E(\alpha;d_1,d_2,d_3)\right),
$$
where
$$
E(\alpha;d_1,d_2,d_3):=\frac{\mu^2}{d_2d_3}\sum\limits_{\substack{-L\le m_j\le L\\ (m_1,m_2)\not=(0,0)}} \left| \sum\limits_{1\le n\le N}
e\left(nd_1\left(\frac{\alpha m_1}{d_2}+\frac{c\alpha m_2}{d_3}\right)\right)\right|.
$$
Now, applying the upper bound sieve gives
\begin{equation}
F_N(\alpha)\le \frac{C(\varepsilon)\mu^2N}{\log^3 N} + O\left(J_N(\alpha)\right),
\end{equation}
where 
\begin{equation*}
\begin{split}
J_N(\alpha):= & \sum\limits_{d_1d_2d_3\le D} (d_1d_2d_3)^{\varepsilon}\left(\frac{N\mu^2}{D^4}+E(\alpha;d_1,d_2,d_3)\right)\\ = & \sum\limits_{d_1d_2d_3\le D} (d_1d_2d_3)^{\varepsilon}E(\alpha;d_1,d_2,d_3)+o\left(\frac{\mu^2N}{\log^3 N}\right) \quad \mbox{as } N\rightarrow \infty.
\end{split}
\end{equation*}
Hence, to establish the claim in Theorem \ref{Theo}(i), it suffices to show that
\begin{equation} \label{average}
\sum\limits_{d_1d_2d_3\le D} (d_1d_2d_3)^{\varepsilon}\int\limits_A^B E(\alpha;d_1,d_2,d_3)d\alpha = o\left(\frac{\mu^2N}{\log^3 N}\right) \quad \mbox{as } N\rightarrow \infty, \ N\in \mathcal{S}.
\end{equation}

\subsection{Average estimation for $E(\alpha;d_1,d_2,d_3)$} 
To estimate the integral in \eqref{average}, we observe that
\begin{equation} \label{E2alpha}
E(\alpha;d_1,d_2,d_3)\ll \frac{\mu^2}{d_2d_3} \sum\limits_{\substack{-L\le m_j\le L\\ (m_1,m_2)\not=(0,0)}} \min\left(\frac{N}{d_1},\frac{1}{||\alpha(m_1d_1/d_2+cm_2d_1/d_3)||}\right)
\end{equation}
and use the following lemma.

\begin{lemma} \label{intlemma}
Suppose that $0<A<B$, $K\ge 2$ and $x\not=0$. Then
\begin{equation} \label{intesti}
\int\limits_A^B \min\left(K,\frac{1}{||\alpha x||}\right)d\alpha=O_{A,B}\left(\min\left\{K,\max\left\{1,|x|^{-1}\right\}\log K\right\}\right).
\end{equation}
\end{lemma}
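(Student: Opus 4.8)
The plan is to split the range of integration according to whether $\|\alpha x\|$ is large or small, i.e. according to the ``spike'' structure of the periodic function $\alpha\mapsto \min(K,\|\alpha x\|^{-1})$. First I would observe that after the substitution $\beta=\alpha x$ the integral becomes $|x|^{-1}\int_{Ax}^{Bx}\min(K,\|\beta\|^{-1})\,d\beta$ (assuming $x>0$; the case $x<0$ is identical by symmetry of $\|\cdot\|$), so the problem reduces to estimating $\int_I \min(K,\|\beta\|^{-1})\,d\beta$ over an interval $I$ of length $(B-A)|x|$. The integrand is $1$-periodic, so this is $O\big((1+(B-A)|x|)\cdot \mathcal I(K)\big)$ where $\mathcal I(K):=\int_0^1 \min(K,\|\beta\|^{-1})\,d\beta$.

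Next I would estimate the one-period integral $\mathcal I(K)$ directly: near $\beta=0$ (and by symmetry near $\beta=1$) the function equals $K$ on $|\beta|\le 1/K$ contributing $O(1)$, and equals $\|\beta\|^{-1}$ on $1/K\le \|\beta\|\le 1/2$ contributing $2\int_{1/K}^{1/2}\beta^{-1}d\beta = O(\log K)$. Hence $\mathcal I(K)=O(\log K)$, and therefore
\begin{equation*}
\int_A^B \min\left(K,\frac{1}{\|\alpha x\|}\right)d\alpha \ll_{A,B} \big(1+|x|\big)\cdot |x|^{-1}\log K \ll_{A,B} \max\{1,|x|^{-1}\}\log K .
\end{equation*}
Separately, the trivial bound $\min(K,\|\alpha x\|^{-1})\le K$ gives $\int_A^B(\cdots)\,d\alpha\le (B-A)K=O_{A,B}(K)$. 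Combining the two bounds yields exactly the claimed $O_{A,B}\big(\min\{K,\max\{1,|x|^{-1}\}\log K\}\big)$.

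There is essentially no serious obstacle here; the only points requiring a little care are (i) handling the case of small $|x|$ — when $(B-A)|x|<1$ the interval $I$ may not even contain a full period, but then one simply bounds $\int_I \min(K,\|\beta\|^{-1})\,d\beta \le \int_{-1/2}^{1/2}\min(K,\|\beta\|^{-1})\,d\beta = O(\log K)$ directly, so the factor $\max\{1,|x|^{-1}\}$ (rather than $|x|^{-1}$) is what is needed and what the statement asserts; and (ii) keeping the dependence on $A,B$ explicit but harmless, since $A>0$ ensures $x\ne 0$ interacts cleanly with the substitution and $B-A$ only enters as a multiplicative constant in the number of periods. The logarithm is genuinely necessary because of the $\beta^{-1}$ tail of each spike, and the $\min$ with $K$ on the outside simply records that one can always fall back on the trivial pointwise bound.
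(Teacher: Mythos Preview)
Your proof is correct and follows essentially the same route as the paper: the substitution $\beta=\alpha x$, the one-period estimate $\int_0^1 \min(K,\|\beta\|^{-1})\,d\beta=O(\log K)$, periodicity to handle an interval of arbitrary length, and the trivial pointwise bound $K$ to obtain the outer $\min$. The only cosmetic difference is that the paper splits into three cases according to whether $|x|(B-A)$ is $\geq 1$, in $[1/K,1)$, or $<1/K$, whereas you obtain the uniform bound $(1+|x|)|x|^{-1}\log K\ll \max\{1,|x|^{-1}\}\log K$ in one stroke and then take the minimum with the trivial bound; this is, if anything, slightly cleaner.
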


\begin{proof} We confine ourselves to the case when $x>0$ since the case when $x<0$ is similar. By change of variables $\beta=\alpha x$, we get
\begin{equation} \label{change}
\int\limits_A^B \min\left(K,\frac{1}{||\alpha x||}\right)d\alpha=\frac{1}{x}\int\limits_{xA}^{xB}  \min\left(K,\frac{1}{||\beta||}\right)d\beta.
\end{equation}
By periodicity of the integrand, if $x(B-A)\ge 1$, we have
\begin{equation} \label{case1}
\frac{1}{x} \int\limits_{xA}^{xB}  \min\left(K,\frac{1}{||\beta||}\right)d\beta\le \frac{[x(A-B)+1]}{x} \int\limits_0^1 \min\left(K,\frac{1}{||\beta||}\right)d\beta =O_{A,B}\left(\log K\right).
\end{equation}
If $1/K\le x(B-A)<1$, then
\begin{equation} \label{case2}
\frac{1}{x} \int\limits_{xA}^{xB}  \min\left(K,\frac{1}{||\beta||}\right)d\beta=\frac{1}{x}\int\limits_0^{x(B-A)} \min\left(K,\frac{1}{||\beta||}\right)d\beta =O_{A,B} \left(\frac{\log K}{x}\right). 
\end{equation}
If $0<x(B-A)< 1/K$, then trivially
\begin{equation} \label{case3}
\frac{1}{x}\int\limits_{xA}^{xB} \min\left(K,\frac{1}{||\alpha x||}\right)d\alpha=O_{A,B}(K).
\end{equation}
Combining \eqref{change}, \eqref{case1} and \eqref{case2}, we deduce the claim when $x>0$, which completes the proof. 
\end{proof}

Note that if $(m_1,m_2)\not=(0,0)$, then the term $m_1d_1/d_2+cm_2d_1/d_3$ on the right-hand side of \eqref{intesti} is non-zero because $c$ is supposed to be irrational. Now, employing \eqref{E2alpha} and Lemma \eqref{intlemma}, we get
\begin{equation} \label{E2using}
\begin{split}
& \int\limits_A^B E(\alpha;d_1,d_2,d_3)d\alpha\\ \ll & \frac{\mu^2}{d_2d_3} \left(\sum\limits_{\substack{-L\le m_2\le L\\ m_2\not=0}} \sum\limits_{-L\le m_1\le L}  \min\left\{\frac{N}{d_1},\max\left\{1,\frac{1}{d_1}\cdot \left|\frac{m_1}{d_2}+\frac{cm_2}{d_3}\right|^{-1}\right\}\log N\right\}\right.+\\
& \left. \sum\limits_{\substack{-L\le m_1\le L\\ m_1\not=0}}  \min\left\{\frac{N}{d_1},\max\left\{1,\frac{d_2}{m_1d_1}\right\}\log N\right\}\right)\\
\ll & \frac{\mu^2}{d_1d_2d_3} \sum\limits_{\substack{-L\le m_2\le L\\ m_2\not=0}} \min\left\{N,R(m_2)^{-1}\log N\right\}+ \frac{\mu^2L^2\log N}{d_2d_3}.
\end{split}
\end{equation}
where 
$$
R(m_2):=\min\limits_{m\in \mathbb{Z}} \left| \frac{m}{d_2}+\frac{cm_2}{d_3}\right|.
$$
Now we use the Diophantine properties of $c$. Since $N\in \mathcal{S}$, we have
$$
c=\frac{a}{\sqrt{N}}+\frac{\beta}{N} \quad \mbox{with } a\in \mathbb{Z}, \ \sqrt{N}\in \mathbb{N}, \ (a,\sqrt{N})=1, \  0\le |\beta|\le 1.
$$
It follows that
$$
R(m_2)\ge \frac{1}{d_2d_3\sqrt{N}}-\frac{cL}{N}\ge \frac{1}{2d_2d_3\sqrt{N}} \quad \mbox{if } 0<|m_2|\le L \mbox{ and $N$ is large enough.} 
$$
This together with \eqref{E2using} gives 
$$
 \int\limits_A^B E(\alpha;d_1,d_2,d_3)d\alpha \ll \frac{\mu^2L\sqrt{N}\log N}{d_1} \quad \mbox{if } 1\le d_2,d_3\le D,
$$
which implies \eqref{average} and thereby completes the proof of Theorem \ref{Theo}(ii).\\ 


\section{Remarks and open questions}

We end with remarks on some variations of Theorem \ref{super} and some open questions. 

\begin{enumerate}
\item An interesting problem would be to prove Khintchine type theorems with restrictions on manifolds. Let $\psi : \mathbb{R}_{+} \to \mathbb{R}_{+} \cup \{0\}$. The question would be to find necessary and sufficient conditions on $\psi$ and on $c$ which guarantee that for almost every $\alpha$,  
\begin{equation} \label{simultan2}
0 < p\alpha - r < \psi(p) \quad \mbox{and} \quad 0< pc\alpha - q < \psi(p)
\end{equation}
\noindent has infinitely many solutions in primes $p, r$ and integers $q$. In the setting where there are no restrictions on rationals, the conditions needed on $c$ are stronger than irrationality cf. \cite{Kl1, Ghosh}.\\ 
\item In Theorem \ref{super} there is an interplay between conditions imposed on $c$ and the quality of the exponent obtained. It is possible that a better exponent and even an asymptotic count for the number of solutions is possible under stronger conditions on $c$.\\
\item A stronger non-asymptotic result can probably be obtained by a lower bound sieve and under GRH. Moreover, one can consider the same question as in \cite{HaJo} but with the weaker condition that $q$ is an integer instead of being a prime. These along with higher dimensional versions of Theorem \ref{super} will be investigated elsewhere.

\end{enumerate}

\end{document}